\DeclareMathOperator{\Tr}{Tr}
\DeclareMathOperator{\Alt}{Alt}
\DeclareMathOperator{\Bil}{Bil}
\theoremstyle{plain}
\newtheorem{theorem}{Theorem}
\newtheorem{corollary}{Corollary}
\newtheorem{lemma}{Lemma}
\theoremstyle{definition}
\newtheorem{definition}{Definition}
\begin{document}
\title[Galois extensions and subspaces of alternating bilinear forms]
{Galois extensions and subspaces of alternating \\ 
bilinear forms with special rank properties} 

\author{Rod Gow and Rachel Quinlan}
\address{School of Mathematical Sciences\\
University College\\
Belfield, Dublin 4\\
Ireland}
\address{Department of Mathematics\\
National University of Ireland\\
Galway\\
Ireland}
\email{rod.gow@ucd.ie, rachel.quinlan@nuigalway.ie}
\keywords{field, Galois extension, cyclic extension, Galois group, alternating bilinear form, constant rank, 
polynomial}
\subjclass{15A63, 12F10, 11E39}
\begin{abstract} 
Let $K$ be a field admitting a cyclic Galois extension of degree $n$. The main result of this paper is a decomposition theorem
for the space of alternating bilinear forms defined on a vector space of odd dimension $n$ over $K$. We show that this space of forms is the direct sum of $(n-1)/2$ subspaces, each of dimension $n$, and the non-zero elements in each subspace have constant rank defined in terms of the orders of the Galois automorphisms. Furthermore, if ordered correctly, for each integer $k$ lying between 1 and $(n-1)/2$, the rank of any non-zero element in the sum of the first $k$ subspaces is at most $n-2k+1$. Slightly less sharp similar results hold for cyclic extensions of even degree.
\end{abstract}
\maketitle
\section{Introduction}
\noindent  Let $K$ be a field and let $L$ be a Galois extension of $K$ of finite degree $n$. 
Let $G$ denote the Galois group of $L$ over $K$. We may consider $L$ to be a vector space of dimension $n$ over $K$ and it serves as a model for any such $n$-dimensional vector space. However,
the Galois nature of the extension enriches the vector space structure and enables us to carry out constructions which we could not achieve without the additional field-theoretic apparatus. In this paper, we intend to investigate alternating bilinear forms defined on $L\times L$, and taking values in $K$. We will be particularly interested in subspaces of such forms and their properties with respect to rank.

Let $\Tr=\Tr^L_K$ denote the trace form $L\to K$, defined by
\[
\Tr(x)=\sum_{\sigma\in G} \sigma(x).
\]
As is well known, this form is not identically zero and it has the Galois invariance property
\[
\Tr(\tau(x))=\Tr(x)
\]
for all $x\in L$ and all $\tau\in G$. It is our main tool for making and investigating  subspaces of alternating bilinear forms.

\section{Galois extensions and alternating bilinear forms}

\noindent We now begin the study of the main topic of this paper. Let $K^*$ and $L^*$ denote the multiplicative groups of $K$ and $L$, respectively.  Let $b$ be an element of $L^*$ and $\sigma$
a non-identity element of $G$. Define 
\[
f=f_{b,\,\sigma}: L\times L \to K
\]
by
$$
f(x,y)=\Tr (b(x\sigma(y)-\sigma(x) y))
$$
for all $x$ and $y$ in $L$. It is straightforward to see that $f$ is $K$-bilinear considering $L$ as a vector space over $K$ and moreover
\[
f(x,x)=\Tr (b(x\sigma(x)-\sigma(x) x))=\Tr(0)=0.
\]
Thus $f$ is an alternating bilinear form. The corresponding form
when $b=0$ is  the zero form. We note also
the following basic property of $f$, which we state as our first lemma.
\begin{lemma} \label{invariance} Let $f=f_{b,\,\sigma}$ be the alternating bilinear form defined above.
Then we have
\[
f(x,y)=\Tr((\sigma^{-1}(bx)-b\sigma(x))y)
\]
for all $x$ and $y$ in $L$.
\end{lemma}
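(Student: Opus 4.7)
The plan is to split $f(x,y)=\Tr(bx\sigma(y))-\Tr(b\sigma(x)y)$ into its two trace summands and rewrite the first one by exploiting the Galois invariance $\Tr(\tau(z))=\Tr(z)$ noted in the introduction. The entire content of the lemma is the observation that one can ``move'' the $\sigma$ off of $y$ in the first term at the price of applying $\sigma^{-1}$ to the rest of the argument.

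Concretely, I would apply $\sigma^{-1}$ to the argument of the first trace: $\sigma^{-1}(bx\sigma(y))=\sigma^{-1}(bx)\cdot y$, so that
\[
\Tr(bx\sigma(y)) \;=\; \Tr\bigl(\sigma^{-1}(bx\sigma(y))\bigr) \;=\; \Tr\bigl(\sigma^{-1}(bx)\,y\bigr).
\]
The second term $\Tr(b\sigma(x)y)$ already has $y$ as a standalone factor, so no manipulation is needed there. Subtracting and using $K$-linearity of the trace gives
\[
f(x,y)=\Tr\bigl(\sigma^{-1}(bx)y\bigr)-\Tr\bigl(b\sigma(x)y\bigr)=\Tr\bigl((\sigma^{-1}(bx)-b\sigma(x))y\bigr),
\]
which is the desired identity.

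There is no real obstacle here: the only facts used are that $\sigma$ is a field automorphism (so it is multiplicative and preserves $K$-linear combinations inside the argument of $\Tr$) and the Galois invariance of the trace. The lemma is really a notational repackaging that exposes $y$ as a ``test vector,'' which will presumably be useful later for reading off the radical of $f_{b,\sigma}$ from the element $\sigma^{-1}(bx)-b\sigma(x)\in L$.
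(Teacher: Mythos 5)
Your proposal is correct and is essentially identical to the paper's proof: both split $f$ into its two trace summands and apply the Galois invariance of $\Tr$ (via $\sigma^{-1}$) to the term $\Tr(bx\,\sigma(y))$ to obtain $\Tr(\sigma^{-1}(bx)\,y)$, then recombine by linearity. No differences worth noting.
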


\begin{proof}
It follows from the Galois invariance of the trace form that 
\[
\Tr((bx)\sigma(y))=\Tr(\sigma^{-1}(bx)y)
\]
and the lemma is an immediate consequence of this observation.
\end{proof}

The non-degeneracy of the trace form implies the following important consequence of Lemma \ref{invariance}. 

\begin{corollary} \label{radical} An element $x$ is in the radical of $f_{b,\,\sigma}$ if and only if
\[
\sigma^{-1}(bx)=b\sigma(x).
\]
\end{corollary}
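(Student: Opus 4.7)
The plan is to combine the formula from Lemma \ref{invariance} with the non-degeneracy of the trace form, which is exactly the tool the corollary advertises.

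First, I would unfold the definition: $x$ lies in the radical of $f_{b,\,\sigma}$ precisely when $f_{b,\,\sigma}(x,y)=0$ for every $y\in L$. Applying Lemma \ref{invariance}, this condition becomes
\[
\Tr\bigl((\sigma^{-1}(bx)-b\sigma(x))\,y\bigr)=0 \qquad \text{for all } y\in L.
\]
Set $u=\sigma^{-1}(bx)-b\sigma(x)\in L$. The problem then reduces to showing that $\Tr(uy)=0$ for all $y\in L$ forces $u=0$.

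This last implication is just the standard non-degeneracy of the trace pairing $L\times L\to K$, $(u,y)\mapsto \Tr(uy)$, which the introduction has already flagged as a known property of $\Tr$. (If one wanted to be self-contained, one could note that were $u\ne 0$, multiplication by $u^{-1}$ would permute $L$, so $\Tr(uy)=0$ for every $y$ would give $\Tr\equiv 0$, contradicting the fact quoted at the start that $\Tr$ is not identically zero.) Applying this to our $u$ gives $\sigma^{-1}(bx)=b\sigma(x)$, and the converse direction is immediate by reading the same equivalences backwards.

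There is no real obstacle here: the content of the corollary is essentially a translation of Lemma \ref{invariance} through non-degeneracy, and the only thing to be careful about is citing (or briefly justifying) that non-degeneracy in the exact form $\Tr(uL)=0 \Rightarrow u=0$.
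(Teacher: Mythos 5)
Your proof is correct and follows exactly the route the paper intends: the corollary is stated there as an immediate consequence of Lemma \ref{invariance} together with the non-degeneracy of the trace form, which is precisely your argument. Your optional aside justifying non-degeneracy (via $\Tr\not\equiv 0$ and the bijectivity of multiplication by $u^{-1}$) is a correct, harmless addition.
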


We proceed next to calculate the rank of the alternating form $f$.

\begin{lemma} \label{rank} Let $f=f_{b,\,\sigma}$ be the alternating bilinear form defined above, with
$b\ne 0$,  and let $F$ be the fixed field of the automorphism $\sigma^2$. Then if $\sigma(b) b^{-1}$ is expressible in the form $\sigma^2(c)c^{-1}$ for some $c\in L^*$, we have
\[
\mbox{\rm rank}\,f=n-[F:K]=n-\frac{n}{[L:F]}.
\]
Otherwise, $f$ has rank $n$ and is thus non-degenerate.
\end{lemma}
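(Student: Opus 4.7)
The plan is to describe the radical of $f$ explicitly using Corollary~\ref{radical} and then read off its $K$-dimension. Applying $\sigma$ to both sides of the equation $\sigma^{-1}(bx)=b\sigma(x)$ that appears in that corollary, and using $b\ne 0$, I would rewrite the condition that $x$ lies in the radical as
\[
\sigma^{2}(x)=\frac{b}{\sigma(b)}\,x.
\]
Thus the radical of $f$ consists of $0$ together with those $x\in L^{*}$ satisfying $\sigma^{2}(x)/x=b/\sigma(b)$.

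The main step is then to analyse the structure of this solution set. If $x_{0}$ and $x$ are both nonzero solutions, their ratio is fixed by $\sigma^{2}$ and hence lies in $F$; conversely, $\lambda x_{0}$ is a solution for every $\lambda\in F$. Since $K\subseteq F$, the coset $x_{0}F$ is a genuine $K$-subspace of $L$ of $K$-dimension $[F:K]$. Hence whenever the radical is nonzero, it equals $x_{0}F$ for any one of its nonzero elements $x_{0}$, and by the tower formula $\operatorname{rank} f=n-[F:K]=n-n/[L:F]$.

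It remains only to connect the existence of a nonzero $x_{0}$ with the hypothesis on $c$. The substitution $c=x_{0}^{-1}$ transforms $\sigma^{2}(x_{0})/x_{0}=b/\sigma(b)$ into $\sigma^{2}(c)/c=\sigma(b)/b$, and this is a bijection between the nonzero solutions of the two equations. So the radical is nontrivial precisely when the hypothesis of the lemma holds; when it fails, the radical is $\{0\}$ and $f$ has rank $n$, i.e.\ is nondegenerate.

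I do not anticipate a serious obstacle: once Corollary~\ref{radical} is rewritten in the multiplicative form above, the description of the radical reduces to the standard observation that a nonempty solution set of a homogeneous $\sigma^{2}$-semilinear scaling equation is a single coset of $F^{*}$, which is a $K$-subspace after adjoining $0$. The only small bookkeeping point is the passage between $x_{0}$ and $c=x_{0}^{-1}$, which is what converts the criterion of Corollary~\ref{radical} into the coboundary condition stated in the lemma.
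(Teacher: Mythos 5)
Your proposal is correct and follows essentially the same route as the paper: apply $\sigma$ to the radical condition from Corollary~\ref{radical}, observe that a nonzero solution $x$ exists exactly when $\sigma(b)b^{-1}=\sigma^{2}(c)c^{-1}$ via the substitution $c=x^{-1}$, and identify the nonzero part of the radical as a single coset of $F^{*}$, giving $\dim R=[F:K]$. The only (cosmetic) difference is that you phrase the coset as $x_{0}F$ for an arbitrary nonzero solution $x_{0}$ rather than in terms of the element $c$, which if anything is a slightly cleaner bookkeeping of the same argument.
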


\begin{proof} Let $R$ denote the radical of $f$ and let $x$ be an element of $R$. We have
\[
\sigma^{-1}(bx)=b\sigma(x)
\]
by Corollary \ref{radical}. Applying $\sigma$ to this equality, we obtain 
\[
bx=\sigma(b) \sigma^2(x).
\]
It follows that if $x$ is non-zero, then, taking $x=c^{-1}$, we must have 
\[
\sigma(b) b^{-1}=\sigma^2(c)c^{-1},
\]
as claimed. On the other hand, if $\sigma(b)b^{-1}$ is not expressible in the stated form, there can be no non-zero solution for $x$ and hence $R=0$, which implies that $f$ has rank $n$. Finally, suppose that
\[
\sigma(b) b^{-1}=\sigma^2(c)c^{-1}
\]
for some non-zero $c$. Then any non-zero $x\in R$ satisfies
\[
x^{-1}c\sigma^2(x)\sigma^2(c)^{-1}=1,
\]
from which it follows that $x\in c F^*$. We deduce that
\[
\dim R=[F:K]
\]
and hence the rank of $f$ is
\[
n-[F:K]=n-\frac{n}{[L:F]}.
\]
\end{proof}

We show next that there is a simple formula for the rank of $f_{b,\,\sigma}$ when $\sigma$ has odd multiplicative order. 

\begin{lemma} \label{oddorderrank} Suppose that the automorphism $\sigma$ has odd multiplicative order $2r+1>1$, say. Then, if $b\ne 0$,  the rank of the alternating bilinear form $f=f_{b,\sigma}$ equals
\[
n-\frac{n}{2r+1}.
\]
\end{lemma}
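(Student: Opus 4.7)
The plan is to reduce everything to Lemma \ref{rank}: it suffices to show that $\sigma(b)b^{-1}$ is expressible as $\sigma^2(c)c^{-1}$ for some $c\in L^*$, and to identify $[F:K]$ where $F$ is the fixed field of $\sigma^2$.

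Because $\sigma$ has odd order $2r+1$ and $\gcd(2,2r+1)=1$, the automorphism $\sigma^2$ also has order $2r+1$, so $\langle\sigma^2\rangle=\langle\sigma\rangle$. Thus $L/F$ is a cyclic Galois extension of degree $2r+1$ whose Galois group is generated by $\sigma^2$, and consequently $[F:K]=n/(2r+1)$. This already pins down the value $n-n/(2r+1)$ predicted by Lemma \ref{rank}, provided the representability condition on $\sigma(b)b^{-1}$ can be verified.

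For that representability, I would invoke Hilbert's Theorem 90 for the cyclic extension $L/F$: an element $a\in L^*$ equals $\sigma^2(c)c^{-1}$ for some $c\in L^*$ if and only if the relative norm $N_{L/F}(a)=\prod_{i=0}^{2r}\sigma^{2i}(a)$ equals $1$. Substituting $a=\sigma(b)b^{-1}$ yields
\[
N_{L/F}(\sigma(b)b^{-1})=\frac{\prod_{i=0}^{2r}\sigma^{2i+1}(b)}{\prod_{i=0}^{2r}\sigma^{2i}(b)}.
\]
Because $\gcd(2,2r+1)=1$, the maps $i\mapsto 2i\pmod{2r+1}$ and $i\mapsto 2i+1\pmod{2r+1}$ are both bijections of $\{0,1,\dots,2r\}$. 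Hence the numerator and denominator both equal $\prod_{j=0}^{2r}\sigma^j(b)$, so the norm is $1$, and Hilbert 90 supplies the required $c$. Lemma \ref{rank} then delivers the stated rank.

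The only substantive step is the norm computation, and even that collapses to a single combinatorial observation. The odd-order hypothesis is used twice and is essential to both uses: it guarantees that $\sigma^2$ has the same order as $\sigma$, so that $[L:F]=2r+1$; and it guarantees that multiplication by $2$ permutes the residues modulo $2r+1$, which is precisely what makes the two products in the norm cancel.
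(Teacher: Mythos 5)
Your proof is correct, but it takes a genuinely different route from the one the paper writes out. The paper factors the trace as $\Tr = T_2\circ T_1$ through the relative trace $T_1:L\to F$, obtains an $F$-valued alternating form $g$ on $L$ viewed as an $F$-space of odd dimension $2r+1$ with $f(x,y)=T_2(g(x,y))$, and concludes that $g$ (hence $f$) has nonzero radical because an alternating form on an odd-dimensional space is always degenerate; Lemma \ref{rank} then forces the rank to be $n-n/(2r+1)$. You instead verify the hypothesis of Lemma \ref{rank} head-on: you check $N_{L/F}\bigl(\sigma(b)b^{-1}\bigr)=1$ and invoke Hilbert's Theorem 90 for the cyclic extension $L/F$ with group $\langle\sigma^2\rangle=\langle\sigma\rangle$ to produce the required $c$. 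The paper explicitly remarks, immediately after its proof, that Hilbert 90 gives an alternative argument, so you have supplied exactly that alternative. Your norm computation is sound (and could be shortened further: $N_{L/F}(\sigma(b))=\sigma(N_{L/F}(b))=N_{L/F}(b)$ since the norm lies in $F$, which $\sigma$ fixes). The trade-off: the paper's argument is more elementary, needing only the parity obstruction for alternating forms and no Theorem 90; yours is more direct in that it exhibits the witness $c$ demanded by Lemma \ref{rank} rather than inferring degeneracy and then reading off which case of that lemma must hold. Both uses you make of the odd-order hypothesis are exactly where it is needed.
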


\begin{proof} 

Under the given supposition, $\sigma$ and $\sigma^2$ generate the same cyclic subgroup, $H$ say, of $G$, and thus the
fixed field $F$ of $\sigma^2$ equals the fixed field of $\sigma$. Let $T_1:L\to F$ be the trace form
defined by
\[
T_1(x)=\sum_{i=1}^{2r+1} \sigma^i(x)
\]
for $x\in L$. Let $\tau_1$, \dots, $\tau_k$ be a complete set of coset representatives of the subgroup $H$ in $G$ and let $T_2:F\to K$ be the $K$-linear form defined by
\[
T_2(z)=\sum_{i=1}^k \tau_i(z)
\]
for $z\in F$. Then we have
\[
\Tr(x)=T_2(T_1(x))
\]
for all $x\in L$. 

Considering $L$ as a vector space over $F$, we may define an alternating $F$-bilinear form
\[
g: L\times L\to F
\]
by
\[
g(x,y)=T_1(b(x\sigma(y)-\sigma(x) y))
\]
for all $x$ and $y$ in $L$. Then we have
\[
f(x,y)=T_2(g(x,y)).
\]
Now as $L$ has odd dimension $2r+1$ over $F$, we know that any alternating bilinear form from
$L\times L$ to $F$ has a non-zero radical, \cite{L} Theorem 8.1. Thus there exists $x\ne 0$ in $L$ with
\[
g(x,y)=0
\]
for all $y\in L$. But this implies that $x$ is also in the radical of $f$, by the equation above relating $f$ and $g$,  and hence $f$ is degenerate.
Lemma \ref{rank} therefore yields that $f$ has rank
\[
n-\frac{n}{[L:F]}=n-\frac{n}{[2r+1]},
\]
as required.

\end{proof}
 
 An alternative approach to proving Lemma \ref{oddorderrank} is to use Hilbert's Theorem 90. 
 
 As we shall see in the next result, the situation described in Lemma \ref{oddorderrank}  when $\sigma$ has odd order is less clear cut when $\sigma$ has even order.

\begin{lemma} \label{nonexistence} Suppose that the automorphism $\sigma$ has even multiplicative order $2r$, say. Then there exist elements $b\in L^*$ such that the equation
\[
\sigma(b) b^{-1}\ne \sigma^2(c)c^{-1}
\]
has no solution for  $c\in L^*$.

\end{lemma}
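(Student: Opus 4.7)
The plan is to translate the conclusion into a norm condition via Hilbert's Theorem 90. Write $M=L^{\langle\sigma\rangle}$ and $F=L^{\langle\sigma^2\rangle}$, so that $[L:F]=r$ and $[F:M]=2$, and $\sigma$ restricts to the nontrivial element of $\operatorname{Gal}(F/M)$ with fixed field $M$. Applying Hilbert 90 to the cyclic extension $L/F$ with generator $\sigma^2$, an element $y\in L^*$ has the form $\sigma^2(c)c^{-1}$ for some $c\in L^*$ if and only if $N_{L/F}(y)=1$. Taking $y=\sigma(b)b^{-1}$ and using the identity $N_{L/F}\circ\sigma=\sigma\circ N_{L/F}$, one gets $N_{L/F}(\sigma(b)b^{-1})=\sigma(N_{L/F}(b))/N_{L/F}(b)$, which equals $1$ precisely when $N_{L/F}(b)\in M^*$. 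Hence the lemma reduces to exhibiting some $b\in L^*$ with $N_{L/F}(b)\notin M^*$.

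To produce such a $b$, I would first try $b\in F^*$, for which $N_{L/F}(b)=b^r$ and the forbidden condition $b^r\in M^*$ is equivalent to $(\sigma(b)/b)^r=1$. As $b$ ranges over $F^*$, the quotient $\sigma(b)/b$ ranges over $\ker N_{F/M}$, which by Hilbert 90 for $F/M$ is in bijection with $F^*/M^*$. When $K$ is infinite so is $F$, and $F^*/M^*$ is then infinite (for example, the elements $1+t\alpha$ for a fixed $\alpha\in F\setminus M$ and varying $t\in M$ represent pairwise distinct $M^*$-cosets). Since the group of $r$-th roots of unity in $F$ is finite, $\ker N_{F/M}$ cannot be contained in it, so some $b\in F^*$ satisfies $b^r\notin M^*$.

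When $K$ is finite, all of $L$, $F$, $M$ are finite fields, the norm $N_{L/F}\colon L^*\to F^*$ is surjective, and $F^*\supsetneq M^*$ since $[F:M]=2$, so any $b$ with $N_{L/F}(b)\in F^*\setminus M^*$ works. The main obstacle is precisely this finite case: restricting to $b\in F^*$ can fail outright, for instance when $K=\mathbb{F}_2$ and $L=\mathbb{F}_{64}$ (so $\sigma$ is the Frobenius of order $2r=6$, $F=\mathbb{F}_4$, $M=\mathbb{F}_2$), where every $b\in F^*$ satisfies $b^r=b^3=1\in M^*$. The surjectivity of the norm map between finite fields is the crucial extra ingredient that rescues the argument in this regime.
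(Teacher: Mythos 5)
Your proof is correct, but it takes a genuinely different route from the paper's. The paper does not invoke Hilbert 90 here: for finite $K$ it counts, using cyclicity of $L^*$, that there are $(q^{2r}-1)/(q-1)$ elements of the form $\sigma(b)b^{-1}$ but only $(q^{2r}-1)/(q^2-1)$ of the form $\sigma^2(c)c^{-1}$; for infinite $K$ it assumes the conclusion fails for every $b$, deduces the identity $b\sigma^2(b)\cdots\sigma^{2r-2}(b)=\sigma(b)\sigma^3(b)\cdots\sigma^{2r-1}(b)$ for all $b$ (which is only the easy half of Hilbert 90, namely that coboundaries have norm $1$), then substitutes $\alpha-b$ for $b$ with $\alpha\in K$ and $b$ chosen with distinct conjugates, obtaining two distinct monic polynomials that agree at infinitely many points of $K$. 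You instead use the hard half of Hilbert 90 for $L/F$ to turn solvability of $\sigma(b)b^{-1}=\sigma^2(c)c^{-1}$ into the exact criterion $N_{L/F}(b)\in M^*$, so the ``bad'' elements form the proper subgroup $N_{L/F}^{-1}(M^*)$ of $L^*$; this is a sharper structural statement, identifying precisely which $b$ fail rather than merely producing one that succeeds. The trade-off is that you still need two cases to exhibit a witness --- the finiteness of the $r$-th roots of unity inside the infinite group $F^*/M^*$ when $K$ is infinite, and surjectivity of the finite-field norm when $K$ is finite (your $\mathbb{F}_{64}/\mathbb{F}_2$ example correctly shows that restricting to $b\in F^*$ genuinely breaks down there) --- whereas the paper's counting and polynomial arguments are more elementary and self-contained. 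Both proofs are valid, and yours is in the spirit of the alternative the authors themselves hint at after Lemma \ref{oddorderrank}.
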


\begin{proof} Suppose first that $K$ is finite. Then as $L$ is an extension of $F$ of even degree $2r$, we
have
\[
|F|=q,\quad |L|=q^{2r},
\]
for some prime power $q$, and we can take $\sigma$ to be given by the Frobenius mapping $x\to x^q$.
Then a straightforward calculation using the cyclicity of $L^*$ implies that the number of elements
of the form $b^\sigma b^{-1}$ is
\[
\frac{q^{2r}-1}{q-1},
\]
whereas the number of elements of the form  $\sigma^2(c)c^{-1}$ is
\[
\frac{q^{2r}-1}{q^2-1}.
\]
Since the latter number is smaller than the former, our claim follows in this case.

We suppose now that $K$ is infinite. Suppose also by way of contradiction that every element
$\sigma(b) b^{-1}$ is expressible as $ \sigma^2(c)c^{-1}$ for some $c$.
Then applying the powers of $\sigma$, we obtain
\[
b\sigma^2(b)\cdots \sigma^{2r-2}(b) = \sigma(b)\sigma^3(b)\cdots
\sigma^{2r-1}(b),
\]
an equality that holds for all $b\in L^*$. In particular, if we take any element $\alpha$ in $K$, and replace
$b$ above by $\alpha-b$, we also have
\[
(\alpha -b)\sigma^2(\alpha -b)\cdots \sigma^{2r-2}(\alpha -b) = 
\sigma(\alpha -b)\sigma^3(\alpha -b)\cdots \sigma^{2r-1}(\alpha-b).
\]
This in turn implies that
\[
(\alpha -b)(\alpha -\sigma^2(b))\cdots (\alpha -\sigma^{2r-2}(b)) = 
(\alpha -\sigma(b))(\alpha -\sigma^3(b))\dots (\alpha
-\sigma^{2r-1}(b)).
\]
We proceed to choose  an element $b$ for which the $2r$ Galois conjugates 
\[
b,\,\sigma(b),\ldots ,\,\sigma^{2r-1}(b)
\]
are all different, and define the polynomials $q_1$ and $q_2$ in the polynomial ring
$L[t]$ by 
\begin{eqnarray*}
q_1 & = & (t-b)(t-\sigma^2(b))\cdots (t-\sigma^{2r-2}(b)) \\
q_2 & = & (t-\sigma(b))(t-\sigma^3(b))\cdots (t-\sigma^{2r-1}(b)).
\end{eqnarray*}
These two polynomials have no roots in common. On the other hand, by what we have proved
above, for any element $\alpha$ of $K$,
\[
q_1(\alpha)=q_2(\alpha)
\]
This yields a contradiction, since it implies that two different monic polynomials agree on infinitely many different points. Thus our claim also holds when $K$ is infinite.
\end{proof}

\begin{corollary}  \label{evenorderrank} Suppose that the automorphism $\sigma$ has even multiplicative order $2r>2$, say. Then, as  $b$ runs over the elements in $L^*$, the alternating bilinear forms $f_{b,\sigma}$ have
rank either 
\[
n-\frac{n}{r}\, \mbox{ or }\, n. 
\]
Examples of each rank occur for suitable choices of $b$.
\end{corollary}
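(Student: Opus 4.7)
The plan is to read off both statements directly from the machinery already in place, with Lemma \ref{rank} providing the dichotomy and Lemmas \ref{nonexistence} and \ref{rank} providing the examples. Because $\sigma$ has order $2r$, its square $\sigma^2$ has order $r$, so the fixed field $F$ of $\sigma^2$ satisfies $[L:F]=r$ and $[F:K]=n/r$. Lemma \ref{rank} then tells us that for any $b\in L^*$ the rank of $f_{b,\sigma}$ is either $n$ (when $\sigma(b)b^{-1}$ is not of the form $\sigma^2(c)c^{-1}$) or $n-n/r$ (when it is). This establishes that these two values are the only possibilities; no further work is needed for the first assertion.

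For the existence of an element $b$ producing rank $n$, I would simply invoke Lemma \ref{nonexistence}: that lemma exhibits, in both the finite and infinite base-field cases, an element $b\in L^*$ for which $\sigma(b)b^{-1}$ cannot be written as $\sigma^2(c)c^{-1}$, and Lemma \ref{rank} converts this immediately to $\mathrm{rank}\,f_{b,\sigma}=n$.

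For the rank $n-n/r$ case, I would take the simplest possible choice, say $b=1$ (or more generally any $b\in K^*$). Then $\sigma(b)b^{-1}=1=\sigma^2(1)\cdot 1^{-1}$, so the hypothesis of the first clause of Lemma \ref{rank} is satisfied with $c=1$, and the rank is $n-n/[L:F]=n-n/r$. Since $r>1$ by assumption, this value is strictly positive and strictly less than $n$, so the two ranks really are distinct.

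The only conceptually non-trivial ingredient here is the existence part of Lemma \ref{nonexistence}; everything else is bookkeeping with $[L:F]=r$ and the observation that the trivial choice $b=1$ realises the smaller rank. There is no genuine obstacle to overcome in the corollary itself.
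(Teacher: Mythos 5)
Your proposal is correct and is exactly the argument the paper intends: the corollary is stated without proof precisely because it follows by combining the rank dichotomy of Lemma \ref{rank} (with $[L:F]=r$ since $\sigma^2$ has order $r$) with Lemma \ref{nonexistence} for the rank-$n$ examples, and a trivial choice such as $b=1$ for the rank $n-n/r$ examples. No discrepancy with the paper's reasoning.
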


We point out here that, if in Corollary \ref{evenorderrank}, $\sigma$ has order 2, then the elements $f_{b,\,\sigma}$ are either 0 or else have rank $n$.

\begin{definition} Let $\Alt(L)$  
denote the set of all alternating bilinear forms $L\times L\to K$. 
\end{definition}

$\Alt(L)$ is naturally a vector space of dimension $n(n-1)/2$ over $K$.
Our application of  the trace form has constructed certain $n$-dimensional subspaces of Alt$(L)$, which
we formally define below.

\begin{definition} Let $\sigma$ be a non-identity element  of the Galois group $G$. We set 
\[
A^\sigma=\{\, f_{b,\,\sigma}:\, b\in L\,\}.
\]
\end{definition}

It is straightforward to see that $A^\sigma$ is a subspace of $\Alt(L)$, and $A^\sigma=A^{\sigma^{-1}}$.
We investigate properties of this
subspace next.

\begin{theorem} \label{subspacesrank} Let $\sigma$ be a non-identity element of the Galois group $G$ of $L$ over $K$. Then, unless $\sigma$ has order $2$, $\dim A^\sigma=n$. 
When $\sigma$ has odd order $2r+1$, the non-zero elements of $A^\sigma$ have constant rank 
\[
n-\frac{n}{2r+1}.
\]
When $\sigma$ has even order $2r>2$, the non-zero elements of $A^\sigma$ either have maximal rank $n$ or else have rank 
\[
n-\frac{n}{r}.
\]
$A^\sigma$ contains elements of both non-zero ranks. In the exceptional case that $\sigma$ has order $2$, $A^\sigma$ has dimension $n/2$.

\end{theorem}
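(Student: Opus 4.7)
The plan is to realise $A^\sigma$ as the image of the $K$-linear map $\phi : L \to \Alt(L)$ sending $b$ to $f_{b,\sigma}$, and to compute its kernel; the rank assertions will then drop out of earlier lemmas. By construction $\phi$ is $K$-linear and surjective onto $A^\sigma$, so $\dim A^\sigma = n - \dim \ker \phi$, and both dimension claims reduce to determining $\ker \phi$.

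To find $\ker \phi$, I would note that $f_{b,\sigma}$ is the zero form precisely when every $x \in L$ lies in its radical. By Corollary \ref{radical} this means $\sigma^{-1}(bx) = b\sigma(x)$ for every $x \in L$, and applying $\sigma$ yields $bx = \sigma(b)\sigma^2(x)$ for every $x$. Setting $x = 1$ gives $b = \sigma(b)$, and substituting back reduces the identity to $b(x - \sigma^2(x)) = 0$ for all $x \in L$. If $\sigma$ has order greater than $2$ then $\sigma^2 \ne 1$, so some $x$ satisfies $x \ne \sigma^2(x)$, forcing $b = 0$; hence $\ker \phi = 0$ and $\dim A^\sigma = n$. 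If instead $\sigma$ has order $2$ then $\sigma^2 = 1$ automatically, the residual condition $b = \sigma(b)$ defines the fixed field of $\sigma$ in $L$, which has $K$-dimension $n/2$, so $\dim A^\sigma = n - n/2 = n/2$.

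The rank assertions then follow by assembling earlier results. The odd-order case is immediate from Lemma \ref{oddorderrank}. For even order $2r > 2$, Corollary \ref{evenorderrank} restricts the possible non-zero ranks of elements of $A^\sigma$ to $n$ and $n - n/r$. To see both values occur, Lemma \ref{nonexistence} furnishes a $b \in L^*$ with $f_{b,\sigma}$ of rank $n$, while taking $b = 1$ gives $\sigma(b)b^{-1} = 1 = \sigma^2(1) \cdot 1^{-1}$, so Lemma \ref{rank} applied with $c = 1$ delivers $f_{1,\sigma}$ of rank $n - n/r$ (since the fixed field $F$ of $\sigma^2$ satisfies $[L:F] = r$). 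The only genuinely new step is the kernel computation in the previous paragraph, and the main obstacle there is recognising that one can extract the scalar condition $b = \sigma(b)$ by specialising $x = 1$ and then using $\sigma^2 \ne 1$ to conclude $b = 0$; everything else is a direct invocation of results already in hand.
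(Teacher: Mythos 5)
Your proposal is correct and follows essentially the same route as the paper: both reduce the dimension claims to computing the kernel of the linear map $b\mapsto f_{b,\sigma}$ via Corollary \ref{radical} and the specialisation $x=1$, and both delegate the rank assertions to Lemmas \ref{rank}, \ref{oddorderrank} and Corollary \ref{evenorderrank}. Your explicit check that $b=1$ realises the rank $n-n/r$ in the even-order case is a welcome detail the paper leaves implicit, but it does not change the argument.
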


\begin{proof}
Only the statements about  $\dim A^\sigma$  require explanation.
Now for $b$ in $L$, the mapping
\[
b\to f_{b,\,\sigma}
\]
is $K$-linear. Since $L$ is $n$-dimensional as a vector space over $K$, it suffices for us to decide 
when $f_{b,\,\sigma}$
is the zero form. Now Corollary \ref{radical} implies that if this form is 0, then
\[
bx=\sigma(b)\sigma^2(x)
\]
for all $x$ in $L$. Taking $x=1$, we obtain $b=\sigma(b)$. Hence, if $b$ is non-zero, we must have
\[
\sigma^2(x)=x
\]
for all $x$ and we deduce that $\sigma$ has order 2. Conversely, if $\sigma$ has order 2 and $b$ is fixed by $\sigma$, it is clear that $f_{b,\,\sigma}=0$. Since in this case the fixed field of $\sigma$
has degree $n/2$ over $K$, $\dim A^\sigma=n/2$ also.
\end{proof}

Our next intention is to show that when $[L:K]=n$ is odd, the space $\Alt(L)$ is the direct sum of $(n-1)/2$ $n$-dimensional subspaces of the form $A^\sigma$, where $\sigma\in G$. 

Suppose then that $n=2m+1$ is odd and enumerate the elements of the Galois group $G$ in the form
\[
1,\,\sigma_1,\,\sigma_1^{-1}, \ldots,\,\sigma_m,\,\sigma_m^{-1}.
\]
We shall correspondingly write $A^i$ in place of $A^{\sigma_i}$, so that
\[
A^i=\{\,f_{b,\,\sigma_i}:\,b\in L\,\}.
\]

Let $L^m$ denote the set of ordered $m$-tuples $(x_1, \ldots, x_m)$ of elements of $L$. $L^m$ is naturally a vector space of dimension $nm=n(n-1)/2$ over $K$ under the usual rules of addition of $m$-tuples and scalar multiplication.


\begin{theorem} \label{isomorphism} Suppose that $n=[L:K]$ is odd, $n>1$,  and set $m=(n-1)/2$. Enumerate the elements of $G$
as
\[
\{\, 1,\,\sigma_1,\,\sigma_1^{-1}, \ldots,\,\sigma_m,\,\sigma_m^{-1}\,\}
\]
Define a $K$-linear mapping 
\[
\phi: L^m\to \Alt(L)
\]
by 
\[
\phi(b_1, \ldots, b_m)=\sum_{i=1}^m f_{b_i, \sigma_i}.
\]
Then $\phi$ is an isomorphism of $K$-vector spaces.
\end{theorem}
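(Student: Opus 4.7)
The plan is to verify injectivity of $\phi$ and then conclude by a dimension count. Both $L^m$ and $\Alt(L)$ have $K$-dimension $nm = n(n-1)/2$, so an injective $K$-linear map between them must be bijective.

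To prove injectivity, I would suppose $\phi(b_1, \ldots, b_m) = 0$ and rewrite each summand using Lemma \ref{invariance}, obtaining
\[
\Tr\left(\left[\sum_{i=1}^m \bigl(\sigma_i^{-1}(b_i x) - b_i \sigma_i(x)\bigr)\right] y\right) = 0
\]
for all $x, y \in L$. Since the trace form $L \to K$ is non-degenerate, for each fixed $x$ the bracketed element of $L$ must vanish. Expanding $\sigma_i^{-1}(b_i x) = \sigma_i^{-1}(b_i)\sigma_i^{-1}(x)$, this becomes the identity
\[
\sum_{i=1}^m \sigma_i^{-1}(b_i)\,\sigma_i^{-1}(x) \;-\; \sum_{i=1}^m b_i\,\sigma_i(x) \;=\; 0,
\]
valid for every $x \in L$.

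This is a linear combination over $L$ of the $2m = n-1$ automorphisms $\sigma_1, \sigma_1^{-1}, \ldots, \sigma_m, \sigma_m^{-1}$. Because $n$ is odd, no element of $G$ has order $2$, so $\sigma_i \ne \sigma_i^{-1}$ for each $i$, and the enumeration makes these $2m$ automorphisms pairwise distinct. Dedekind's theorem on the linear independence of characters then forces every coefficient in the displayed identity to vanish; the coefficient of $\sigma_i$ yields $-b_i = 0$ (and the coefficient of $\sigma_i^{-1}$ redundantly gives $\sigma_i^{-1}(b_i) = 0$). Hence $b_i = 0$ for each $i$, so $\phi$ is injective and therefore an isomorphism.

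The only delicate point is ensuring the two sums in the displayed relation do not share an automorphism, so that Dedekind's theorem isolates each $b_i$. This is exactly what the parity hypothesis on $n$ guarantees; were $\sigma_i = \sigma_i^{-1}$, the two coefficients would merge to $\sigma_i^{-1}(b_i) - b_i$, which can vanish for nonzero $b_i$ in the fixed field, and the injectivity argument would fail—consistent with the dimension drop for order-$2$ automorphisms recorded in Theorem \ref{subspacesrank}.
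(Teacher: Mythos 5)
Your proof is correct and follows essentially the same route as the paper's: reduce to injectivity by the dimension count, use the non-degeneracy of the trace form (the paper cites Corollary \ref{radical}) to extract the identity $\sum_{i=1}^m \sigma_i^{-1}(b_i)\sigma_i^{-1}(x) - b_i\sigma_i(x)=0$ for all $x$, and apply Artin/Dedekind independence of the $2m$ distinct characters to conclude each $b_i=0$. Your closing remark on why oddness of $n$ guarantees $\sigma_i\neq\sigma_i^{-1}$ makes explicit a point the paper leaves implicit, but the argument is the same.
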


\begin{proof} As $L^m$ and $\Alt(L)$ have the same dimension over $K$, it suffices to show that if
\[
\phi(b_1,\ldots, b_m)=0,
\]
then 
\[
b_1=\cdots =b_m=0.
\]
Suppose therefore that 
\[
\phi(b_1, \ldots, b_m)(x,y)=\sum_{i=1}^m f_{b_i,\,\sigma_i}(x,y)=0
\]
for all $x$ and $y$. Using the linearity of the trace form, we obtain from  Corollary \ref{radical} that
\[
\sum_{i=1}^m \sigma_i^{-1}(b_ix)-b_i\sigma_i(x)=0
\]
for all $x\in L$. We therefore obtain
\[
\sum_{i=1}^m \sigma_i^{-1}(b_i) \sigma_i^{-1}-b_i\sigma_i=0.
\]
However, since the automorphisms appearing in the sum above are all distinct, Artin's theorem on the independence of characters, \cite{L} Theorem 4.1,  implies
that
\[
b_1=\cdots =b_m=0,
\]
as required.
\end{proof}

\begin{corollary} \label{odddirectsum} Suppose that $n=[L:K]$ is odd and set $m=(n-1)/2$. Then there is a direct sum decomposition
\[
\Alt(L)=A^1\oplus \cdots \oplus A^m,
\]
where each subspace $A^i$ has dimension $n$. If $A^i$ is defined by the automorphism $\sigma_i$ or its inverse, and if $\sigma_i$ has order $2r_i+1$, the non-zero elements of $A^i$ all have rank
\[
n-\frac{n}{2r_i+1}.
\]
\end{corollary}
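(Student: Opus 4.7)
The plan is to assemble the corollary directly from the results already established, with only one small parity observation needed at the start.

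First, I would observe that since $n = |G|$ is odd, every element of $G$ has odd multiplicative order. In particular, no $\sigma_i$ in the chosen enumeration has order $2$, so Theorem \ref{subspacesrank} applies to give $\dim A^i = n$ for each $i = 1, \ldots, m$. This also ensures the pairs $\{\sigma_i, \sigma_i^{-1}\}$ are genuine pairs (the two elements are distinct, since an element of odd order $>1$ is never its own inverse), so the enumeration of $G \setminus \{1\}$ in Theorem \ref{isomorphism} is consistent.

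Next I would invoke Theorem \ref{isomorphism}: the $K$-linear map $\phi : L^m \to \Alt(L)$ sending $(b_1,\ldots,b_m)$ to $\sum_i f_{b_i,\sigma_i}$ is an isomorphism. By definition of $\phi$, its image is $A^1 + \cdots + A^m$, so this sum exhausts $\Alt(L)$. Comparing dimensions, $\sum_i \dim A^i = mn = n(n-1)/2 = \dim \Alt(L)$, which forces the sum to be direct. Thus $\Alt(L) = A^1 \oplus \cdots \oplus A^m$.

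Finally, for the rank statement, I would apply Lemma \ref{oddorderrank} to $\sigma_i$ of odd order $2r_i + 1 > 1$: any nonzero $f_{b,\sigma_i} \in A^i$ has rank $n - n/(2r_i+1)$. Since $A^i = A^{\sigma_i^{-1}}$, this formula is insensitive to the choice between $\sigma_i$ and its inverse, as required. There is no real obstacle here; the corollary is essentially a packaging of Theorem \ref{isomorphism}, Theorem \ref{subspacesrank}, and Lemma \ref{oddorderrank}, with the odd-order remark as the only ingredient that needed to be noted separately.
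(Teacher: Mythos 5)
Your proof is correct and takes essentially the same route as the paper: the corollary is stated there without a separate proof, as an immediate consequence of Theorem \ref{isomorphism} (giving the spanning and directness), Theorem \ref{subspacesrank} (giving $\dim A^i=n$), and Lemma \ref{oddorderrank} (giving the constant rank), which is exactly what you assemble. Your explicit remark that a group of odd order contains no involutions, so the order-$2$ exceptional case of Theorem \ref{subspacesrank} never arises and the pairs $\{\sigma_i,\sigma_i^{-1}\}$ are genuine, is the one detail the paper leaves implicit, and it is correct.
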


We turn to establishing an analogue of Corollary \ref{odddirectsum} for Galois extensions of even degree. Theorem \ref{subspacesrank}  shows us that we must expect to use subspaces containing elements of two different ranks
when decomposing $\Alt(L)$ and that elements of order 2 (involutions) in the Galois group
lead to slightly different results.

\begin{theorem} \label{epimorphism} Suppose that $n=[L:K]$ is even and 
the Galois group $G$ of $L$ over $K$ contains exactly $k$ elements of order 2, $\tau_1$, \dots, $\tau_k$, say.  Let $F_i$ be the fixed field of $\tau_i$ for $1\le i\le k$. Enumerate the remaining non-identity elements of $G$
as
\[
\{\, \sigma_1,\,\sigma_1^{-1}, \ldots,\,\sigma_m,\,\sigma_m^{-1}\,\},
\]
where $1+k+2m=n$.
Define a $K$-linear mapping 
\[
\phi: L^{k+m}\to \Alt(L)
\]
by 
\[
\phi(b_1,\ldots,  b_k, c_1, \ldots, c_m)=\sum_{i=1}^kf_{b_i,\,\tau_i}+\sum_{j=1}^m f_{c_j, \sigma_j}.
\]
Then $\phi$ maps $L^{k+m}$ onto $\Alt(L)$ and the kernel of $\phi$, which has dimension $kn/2$,  consists of all elements
\[
(b_1, \ldots,  b_k, 0, \ldots, 0),
\]
where $b_i\in F_i$, $1\le i\le k$. 
\end{theorem}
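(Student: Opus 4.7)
The plan is to follow the strategy of Theorem \ref{isomorphism}: compute $\ker\phi$ using Corollary \ref{radical} and Artin's theorem on independence of characters, and then recover surjectivity by a dimension count. The wrinkle compared with the odd case is that the involutions $\tau_i$ satisfy $\tau_i^{-1}=\tau_i$, so the two ``sides'' of $f_{b_i,\tau_i}$ cannot be read off as coefficients of distinct characters; they must first be consolidated.

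Concretely, assume $\phi(b_1,\dots,b_k,c_1,\dots,c_m)=0$. Using Lemma \ref{invariance} inside each summand and the non-degeneracy of the trace, the radical condition of Corollary \ref{radical} applied to the whole sum becomes the identity
\[
\sum_{i=1}^k\bigl(\tau_i^{-1}(b_ix)-b_i\tau_i(x)\bigr)+\sum_{j=1}^m\bigl(\sigma_j^{-1}(c_jx)-c_j\sigma_j(x)\bigr)=0
\]
for every $x\in L$. Since $\tau_i^{-1}=\tau_i$, the $i$-th involution term collapses to $(\tau_i(b_i)-b_i)\,\tau_i(x)$, while the $j$-th pair expands to $\sigma_j^{-1}(c_j)\,\sigma_j^{-1}(x)-c_j\,\sigma_j(x)$. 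The resulting relation is a $K$-linear dependence among the $k+2m=n-1$ pairwise distinct non-identity automorphisms $\tau_1,\dots,\tau_k,\sigma_1,\sigma_1^{-1},\dots,\sigma_m,\sigma_m^{-1}$. Artin's theorem on the independence of characters (\cite{L} Theorem 4.1) forces each coefficient to vanish, giving $\tau_i(b_i)=b_i$ for every $i$ and $c_j=0$ for every $j$. Thus $\ker\phi=\{(b_1,\dots,b_k,0,\dots,0):b_i\in F_i\}$, as claimed.

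Because each $F_i$ has dimension $n/2$ over $K$, this kernel has dimension $kn/2$. A dimension count then finishes the proof:
\[
\dim\,\mathrm{im}\,\phi=n(k+m)-\frac{kn}{2}=\frac{n(k+2m)}{2}=\frac{n(n-1)}{2}=\dim\Alt(L),
\]
so $\phi$ is onto.

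The only genuine subtlety is the first step, namely recognising that the involution contributions must be gathered into a single $\tau_i$-character before Artin's theorem is invoked; otherwise one would be tempted to conclude $b_i=0$ rather than the correct $b_i\in F_i$. Everything else is a direct transcription of the argument used in Theorem \ref{isomorphism} together with the bookkeeping $1+k+2m=n$.
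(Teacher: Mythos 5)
Your proof is correct and takes essentially the same route as the paper's: the paper simply asserts that the kernel computation follows from the proofs of Theorems \ref{subspacesrank} and \ref{isomorphism} and then performs the identical dimension count. You have merely written out the detail the paper leaves implicit, namely consolidating each involution's contribution into the single coefficient $\tau_i(b_i)-b_i$ of the character $\tau_i$ before invoking Artin's independence theorem.
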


\begin{proof} It is straightforward to see from the proofs of Theorems \ref{subspacesrank} and \ref{isomorphism}  that $\ker \phi$ consists of
elements of the stated form. Since each subfield $F_i$ satisfies $[F_i:K]=n/2$,  we have
\[
\dim \ker \phi=\frac{kn}{2}.
\]
Furthermore, since
\[
\dim L^{k+m}-\dim \ker \phi=(k+m)n-\frac{kn}{2}=\frac{n(n-1)}{2}=\dim \Alt(L),
\]
it is clear that $\phi$ is surjective.

\end{proof}

\begin{corollary} \label{evendirectsum} Assume the hypotheses and notation of Theorem \ref{epimorphism}. There is a direct sum decomposition
\[
\Alt(L)=B^1\oplus \cdots \oplus B^k\oplus  A^1\oplus \cdots\oplus A^m,
\]
where 
\[
\dim B^i=\frac{n}{2}, \, 1\le i\le k, \quad \dim A^j=n,\, 1\le j\le m.
\]
 $B^i$ is defined by the involution $\tau_i$ and all its non-zero elements have rank $n$, for
 $1\le i\le k$.   $A^j$ is defined by the automorphism $\sigma_j$
or its inverse, for $1\le j\le m$,  and the rank of each element in $A^j$ is given in accordance with the results of Lemma \ref{oddorderrank} and
Corollary \ref{evenorderrank}. 
\end{corollary}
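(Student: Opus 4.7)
The plan is essentially a bookkeeping exercise that assembles the preceding results into their final form. Set $B^i := A^{\tau_i}$ and $A^j := A^{\sigma_j}$, so that the sum $B^1 + \cdots + B^k + A^1 + \cdots + A^m$ is precisely the image of the map $\phi$ constructed in Theorem \ref{epimorphism}. Since that theorem already proves $\phi$ surjective, this sum exhausts $\Alt(L)$, and the dimension of each piece has been recorded in Theorem \ref{subspacesrank}.

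I would upgrade ``sum'' to ``direct sum'' by a dimension count. Theorem \ref{subspacesrank} gives $\dim A^{\tau_i} = n/2$ for each involution and $\dim A^{\sigma_j} = n$ for each non-involution. Using the identity $1 + k + 2m = n$, the total dimension of the listed subspaces is
\[
k \cdot \frac{n}{2} + m \cdot n \;=\; \frac{n(k+2m)}{2} \;=\; \frac{n(n-1)}{2} \;=\; \dim \Alt(L),
\]
and matching the dimension of the ambient space forces the sum to be direct. If preferred, directness can alternatively be read off the explicit kernel description in Theorem \ref{epimorphism}, since the kernel lives entirely in the $B^i$ coordinates and each such kernel element produces the zero form (the $b_i$ sit in $F_i$, hence are fixed by $\tau_i$, so by the order-$2$ analysis in Theorem \ref{subspacesrank} the corresponding $f_{b_i,\tau_i}$ vanishes).

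The rank statements then come for free. For $B^i$, the remark following Corollary \ref{evenorderrank} notes that when $\sigma$ has order $2$ every $f_{b,\sigma}$ is either zero or has rank $n$, so every non-zero element of $B^i$ has rank $n$. For $A^j$ the rank is determined purely by the order of $\sigma_j$: Lemma \ref{oddorderrank} covers the odd-order case and Corollary \ref{evenorderrank} covers even orders greater than $2$, the excluded order-$2$ case having been siphoned off into the $B^i$. I do not anticipate any genuine obstacle here; the only steps requiring care are checking the dimension arithmetic and confirming that the kernel description of Theorem \ref{epimorphism} is strong enough to force directness — both of which are immediate once the setup is written down.
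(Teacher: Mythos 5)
Your proposal is correct and follows essentially the same route as the paper, which states this corollary without proof as an immediate consequence of Theorem \ref{epimorphism}: surjectivity of $\phi$ gives that the subspaces span $\Alt(L)$, the kernel description (or equivalently the dimension count $k\cdot\frac{n}{2}+mn=\frac{n(n-1)}{2}$) forces the sum to be direct, and the rank assertions are exactly those recorded in Theorem \ref{subspacesrank}, Lemma \ref{oddorderrank}, Corollary \ref{evenorderrank} and the remark on involutions. Nothing is missing.
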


\section{Cyclic extensions and rank properties}

\noindent We have decomposed the space $\Alt(L)$ of alternating bilinear forms into a direct sum of subspaces
$A^i$ which have special properties with respect to the ranks of their elements, particularly when $[L:K]$
is odd. We would also like to know if sums of certain of these subspaces also enjoy restricted rank properties. It is our intention to investigate this problem in the case that the Galois group $G$ is cyclic.

Suppose then that $L$ is a cyclic Galois extension of $K$ of degree $n$ and $\sigma$ generates
the Galois group $G$ of $L$ over $K$. Given elements $x_1$, \dots, $x_k$ of $L$, we seek a method
to decide when these elements are linearly dependent over $K$. The following result generalizes
a known criterion valid when $K$ is finite, \cite{Lidl}, Lemma 3.51. 

\begin{theorem} \label{lineardependence} Let $L$ be a cyclic Galois extension of $K$ of degree $n$ and suppose that $\sigma$ generates
the Galois group of $L$ over $K$. Let $k$ be an integer satisfying $1\le k\le n$ and let  $x_1$, \dots, $x_k$ be elements of  $L$. Then these elements are linearly dependent over $K$ if and and only if
\[
\det S=0,
\]
where $S$ is  the $k\times k$ matrix $S$ whose $i,j$-entry is 
\[
\sigma^{i-1}(x_j), \quad 1\le i, j\le k.
\]
\end{theorem}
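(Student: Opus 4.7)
The easy direction ($\Leftarrow$) is formal: any relation $\sum_j c_j x_j = 0$ with $c_j \in K$, not all zero, is Galois-invariant, so applying $\sigma^{i-1}$ to both sides produces a nontrivial $K$-linear relation among the columns of $S$ and hence $\det S = 0$. For the converse I propose induction on $k$, with trivial base case $k = 1$. Assume the statement for $k - 1$, and suppose $\det S = 0$; if $x_1, \ldots, x_{k-1}$ are already $K$-linearly dependent there is nothing to prove, so I may assume they are $K$-independent, whereupon the inductive hypothesis gives $\det S_{k-1} \ne 0$, where $S_{k-1}$ denotes the $(k-1) \times (k-1)$ leading minor.

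Vanishing of $\det S$ supplies $b_1, \ldots, b_k \in L$, not all zero, with $\sum_j b_j \sigma^{i-1}(x_j) = 0$ for $1 \le i \le k$. Restricting to the first $k-1$ rows and invoking $\det S_{k-1} \ne 0$ immediately forces $b_k \ne 0$; dividing through, I obtain $d_j \in L$ ($1 \le j \le k-1$) with
\[
\sigma^{i-1}(x_k) = \sum_{j=1}^{k-1} d_j \, \sigma^{i-1}(x_j), \qquad 1 \le i \le k. \quad (\ast)
\]
The heart of the argument is to prove that each $d_j$ lies in $K$. For this I would apply $\sigma^{-1}$ to $(\ast)$ for $i = 2, \ldots, k$; after reindexing, this gives the same shape of equation but with $\sigma^{-1}(d_j)$ in place of $d_j$, valid now for $1 \le i \le k-1$. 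Subtracting the two families in the overlapping range produces
\[
\sum_{j=1}^{k-1} \bigl(d_j - \sigma^{-1}(d_j)\bigr) \sigma^{i-1}(x_j) = 0, \qquad 1 \le i \le k-1,
\]
i.e., the vector $(d_j - \sigma^{-1}(d_j))_{j=1}^{k-1}$ is annihilated by $S_{k-1}$. Since $\det S_{k-1} \ne 0$, each $d_j$ is fixed by $\sigma$, and because $\sigma$ generates $G$ this is precisely the statement $d_j \in K$. The $i = 1$ instance of $(\ast)$ then reads $x_k = \sum_{j < k} d_j x_j$, the sought $K$-linear dependence.

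The main obstacle is the extraction step $d_j \in K$: this is the one point where \emph{cyclicity} of $G$ enters in an essential way, because without it $\sigma$-invariance would only place the $d_j$ in the fixed field of $\langle \sigma \rangle$, not in $K$. The rest of the argument is routine linear algebra over $L$ combined with the inductive hypothesis, so this shifting-and-subtracting trick, together with the hypothesis that $\sigma$ generates the entire Galois group, is what makes the proof run.
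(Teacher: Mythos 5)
Your proof is correct, but it runs along a genuinely different track from the paper's. The paper also inducts on $k$, but it applies the inductive hypothesis in the ``forward'' direction to a \emph{derived} family of elements: it normalizes by $x_1$ (so the first column becomes all $1$'s), subtracts consecutive rows to produce a smaller matrix whose entries are $\sigma^{i-1}(y_j)$ with $y_j=\sigma(z_{j+1})-z_{j+1}$ and $z_j = x_1^{-1}x_j$, extracts a $K$-dependence among the $y_j$ by induction, and then ``integrates'' that relation back up, using that $\sigma$-fixed elements lie in $K$ to recover a dependence among the original $x_j$. You instead use the inductive hypothesis in contrapositive form to get invertibility of the leading $(k-1)\times(k-1)$ minor, solve over $L$ for the coefficients $d_j$ expressing $x_k$ in terms of $x_1,\dots,x_{k-1}$ coherently across all rows, and then show the $d_j$ are $\sigma$-invariant by your shift-and-subtract trick --- the analogue of the classical Wronskian argument that coefficients solving a nonsingular system compatibly under differentiation must be constants. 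Both proofs invoke cyclicity at exactly the same point (an element fixed by $\sigma$ lies in $K$ because $\sigma$ generates $G$), and both are complete; yours has the advantage of working directly with the original elements and making the role of the nonsingular minor explicit, while the paper's telescoping reduction avoids ever dividing by a determinant and mirrors the classical finite-field treatment of Moore-type determinants that it generalizes.
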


\begin{proof} Suppose first that  $x_1$, \dots, $x_k$ are linearly dependent over $K$.
Then there are elements $a_1$, \dots, $a_k$ of $K$, not all 0, with
\[
a_1x_1+\cdots +a_kx_k=0.
\]
Applying the powers of $\sigma$ to this equation, we obtain
\[
a_1\sigma^{i-1}(x_1)+\cdots +a_k\sigma^{i-1}(x_k)=0
\]
for $1\le i\le k$. We thus have a homogeneous system of $k$ linear equations with square coefficient matrix $S
$ and this system has a non-trivial solution. It follows that $\det S=0$ (of course this argument does not depend on cyclicity). 

Suppose conversely that $\det S=0$. We prove that the elements are linearly dependent  by induction on $k$. 
Since the result is trivial when $k=1$, we may assume that
$k>1$. Furthermore, we may clearly assume that $x_1\ne 0$. Define elements $z_1$, $z_2$, \dots, $z_k$ of $L$ by
\[
z_1=1,\quad z_i=x_1^{-1}x_i
\]
for $2\le i\le k$. We now multiply all the entries of the $i$-th row of $S$ by $\sigma^{i-1}(x_1)^{-1}$
for $1\le i\le k$. Let $T$ be the matrix thus produced. We find that the $i,j$-entry of $T$ is
\[
\sigma^{i-1}(z_j)
\]
and thus, in particular, the first column of $T$ consists entirely of 1's. We also have $\det T=0$.

Next, we subtract successively row $k-1$ of $T$ from row $k$, row $k-2$ from row $k-1$, \dots,
row 1 from row 2. Let $U$ be the matrix obtained. Clearly, $\det U=0$ and all entries of the first
column of $U$ are 0, with the exception of the top entry, which is 1. Let $V$ be the $k-1\times k-1$ matrix
obtained by deleting the first row and column of $U$. Expansion of $\det U$ along the first row
shows that 
\[
\det V=\det U=0.
\]
Note also that the $i,j$-entry of $V$ is
\[
\sigma^i(z_{j+1})-\sigma^{i-1}(z_{j+1}), \quad 1\le i,j\le k-1.
\]

We now set
\[
y_j=\sigma(z_{j+1})-z_{j+1}, \quad 1\le j\le k-1
\]
and observe that the $i,j$-entry of $V$ is $\sigma^{i-1}(y_j)$. The induction hypothesis applies
to the $k-1$ elements $y_1$, \dots, $y_{k-1}$, since $\det V=0$,  and we see that
these elements are linearly dependent over $K$. Hence there  are elements $b_1$, \dots, $b_{k-1}$ of $K$, not all 0, with
\[
b_1y_1+\cdots +b_{k-1}y_{k-1}=0.
\]
Expressing the $y_i$ in terms of the $z_i$, this leads to
\[
b_1\sigma(z_2)+\cdots +b_{k-1}\sigma(z_k)=b_1z_2+\cdots +b_{k-1}z_k
\]
and we see that $\sigma$ fixes $b_1z_2+\cdots +b_{k-1}z_k$. Since the elements fixed by
$\sigma$ lie in $K$, we deduce that there is some element $b_k$, say, of $K$ with
\[
b_1z_2+\cdots +b_{k-1}z_k=b_k.
\]
If we now substitute $z_i=x_1^{-1}x_i$ for $2\le i\le k$, we find that we have obtained a non-trivial
dependence relation for $x_1$, \dots, $x_k$ over $K$, as required.
\end{proof}

We turn to making an immediate application of Theorem \ref{lineardependence}.
We assume as before that $L$ is a cyclic Galois extension of $K$ of degree $n$ and $\sigma$ generates
the Galois group of $L$ over $K$. Let 
\[
w(t)=\sum_{i=0}^k  b_it^i
\]
be a polynomial in $L[t]$. We may define the $K$-linear transformation $w(\sigma)$ acting
on $L$ in an obvious way. The set of elements $x\in L$ satisfying $w(\sigma)x=0$ clearly
is a $K$-subspace of $L$, whose dimension we will estimate using Theorem \ref{lineardependence}.

\begin{theorem} \label{dimension} Let $L$ be a cyclic Galois extension of $K$ of degree $n$ and suppose that $\sigma$ generates
the Galois group of $L$ over $K$. Let $k$ be an integer satisfying $1\le k\le n$ and let $w$ be a polynomial of degree $k$ in $L[t]$. Let 
\[
R=\{\,x\in L: w(\sigma)x=0\,\}.
\]
Then we have 
\[
\dim R\le k.
\]
\end{theorem}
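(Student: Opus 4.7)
The plan is to apply Theorem \ref{lineardependence} directly. Suppose for contradiction that $R$ contains $k+1$ elements $x_1,\ldots,x_{k+1}$ which are $K$-linearly independent; then Theorem \ref{lineardependence} applied to the $(k+1)\times(k+1)$ matrix $S$ with $(i,j)$-entry $\sigma^{i-1}(x_j)$ would force $\det S\ne 0$. I will show that the hypothesis $w(\sigma)x_j=0$ produces a nontrivial linear dependence among the rows of $S$, which contradicts this.

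Write $w(t)=\sum_{i=0}^k b_it^i$ with $b_k\ne 0$. Since each $x_j$ lies in $R$, we have
\[
\sum_{i=0}^k b_i\sigma^i(x_j)=0\qquad\text{for } 1\le j\le k+1.
\]
Reading this relation columnwise in $S$ (after appending one more row of the form $\sigma^k(x_j)$), it asserts exactly that the $L$-linear combination $\sum_{i=0}^k b_i\,(\text{row}\ i{+}1)$ of the rows of $S$ is the zero vector. Because $b_k\ne 0$ this is a nontrivial dependence, so the rows of $S$ are $L$-linearly dependent and hence $\det S=0$. By Theorem \ref{lineardependence} the elements $x_1,\ldots,x_{k+1}$ are then $K$-linearly dependent, contradicting our choice.

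Therefore no $k+1$ elements of $R$ can be $K$-linearly independent, giving $\dim R\le k$. The only real step requiring care is the bookkeeping that matches the row index $i$ of $S$ with the power $\sigma^{i-1}$ and the coefficient $b_{i-1}$ of $w$; once this is lined up correctly the argument is immediate. There is no genuine obstacle, since Theorem \ref{lineardependence} has already done the heavy lifting of translating $K$-linear independence into a statement about the determinant of a matrix of Galois conjugates.
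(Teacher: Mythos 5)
Your proposal is correct and follows essentially the same route as the paper: both derive $\det S=0$ for the $(k+1)\times(k+1)$ matrix of Galois conjugates from the relations $\sum_i b_i\sigma^i(x_j)=0$ (you phrase this as a row dependence of $S$, the paper as a nontrivial solution of the homogeneous system with the transposed coefficient matrix), and then invoke Theorem \ref{lineardependence} to contradict the assumed $K$-linear independence. The parenthetical about ``appending one more row'' is unnecessary --- the matrix of Theorem \ref{lineardependence} for $k+1$ elements already has rows indexed by $\sigma^0,\ldots,\sigma^k$ --- but this does not affect the argument.
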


\begin{proof} Suppose if possible that $\dim R>k$ and let $x_1$, \dots, $x_{k+1}$ be elements of $R$
that are linearly independent over $K$. Let
\[
w=\sum_{i=0}^k  b_it^i,
\]
where $b_k\ne 0$. Then we have
\[
b_0 x_i +b_1\sigma(x_i)+\cdots +b_k\sigma^k(x_i)=0,\quad 1\le i\le k+1.
\]
This is a homogeneous system of $k+1$ linear equations with $k+1\times k+1$ coefficient matrix $A$, say, where the $i,j$-entry of $A$ is
\[
\sigma^{j-1}(x_i), \quad 1\le i,j \le k+1.
\]
We deduce that $\det A=0$, since the system has a non-trivial solution. It follows that $\det A'=0$ also, where $A'$ is the transpose of $A$. But $A'$ is the $k+1\times k+1$ version of the matrix described in
Theorem \ref{lineardependence}. This theorem implies that $x_1$, \dots, $x_{k+1}$ are linearly
dependent over $K$, a contradiction. We conclude that $\dim R\le k$, as claimed.
\end{proof}

We can use the conclusion of Theorem \ref{lineardependence} to show that any $k$-dimensional
subspace of $L$ arises as a subspace $R$, as described in Theorem \ref{dimension}.

\begin{corollary} Let $L$ be a cyclic Galois extension of $K$ of degree $n$ and suppose that $\sigma$ generates
the Galois group of $L$ over $K$. Let $k$ be an integer satisfying $1\le k\le n$ and let $U$ be a subspace of $L$ of dimension $k$. Then there exists a polynomial $w$, say, of degree $k$ in $L[t]$ such that 
\[
U=\{\,x\in L: w(\sigma)x=0\,\}.
\]
\end{corollary}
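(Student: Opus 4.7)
The plan is to build $w$ directly from a basis of $U$ using a determinantal formula, and then close the argument by a dimension count against Theorem~\ref{dimension}.

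Fix a $K$-basis $u_1,\ldots,u_k$ of $U$ and set
$$
w(t)=\det M(t),
$$
where $M(t)$ is the $(k+1)\times(k+1)$ matrix whose $j$-th column, for $1\le j\le k$, has entries $\sigma^{i-1}(u_j)$ ($1\le i\le k+1$), and whose final column is $(1,t,t^2,\ldots,t^k)^{T}$. Expanding along the last column displays $w(t)=\sum_{j=0}^{k}c_j t^j$ as a polynomial in $L[t]$ of degree at most $k$.

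The first thing to check is that $w(\sigma)$ annihilates $U$. For each $i$, the element $w(\sigma)u_i=\sum_{j=0}^{k}c_j\sigma^j(u_i)$ is, with the same cofactor signs, the expansion along the last column of the matrix obtained from $M(t)$ by substituting $(u_i,\sigma(u_i),\ldots,\sigma^k(u_i))^{T}$ in place of $(1,t,\ldots,t^k)^{T}$. That new column coincides with the $i$-th column of the original matrix, so the determinant vanishes, giving $u_i\in\ker w(\sigma)$ and hence $U\subseteq\ker w(\sigma)$.

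The second step — and this is precisely where Theorem~\ref{lineardependence} enters, in the manner the sentence preceding the corollary suggests — is to verify that $w$ has degree \emph{exactly} $k$. The leading coefficient $c_k$ equals, up to sign, $\det\bigl(\sigma^{i-1}(u_j)\bigr)_{1\le i,j\le k}$, and Theorem~\ref{lineardependence} guarantees that this determinant is nonzero precisely because $u_1,\ldots,u_k$ are $K$-linearly independent. With $\deg w=k$ in hand, Theorem~\ref{dimension} yields $\dim\ker w(\sigma)\le k=\dim U$, which combined with the inclusion $U\subseteq\ker w(\sigma)$ forces $U=\ker w(\sigma)$.

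The only genuine obstacle is controlling the degree of $w$; every other step is a formal determinantal identity. The determinantal construction above is tailored so that nonvanishing of the leading coefficient is exactly the nondegeneracy criterion of Theorem~\ref{lineardependence} applied to the chosen basis of $U$, which is what makes the argument close cleanly.
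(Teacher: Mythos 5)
Your proof is correct and is essentially the paper's own argument: the paper solves the linear system $S'B=\Sigma$ to produce a monic annihilating polynomial of degree $k$, and your determinantal formula is just Cramer's rule for that same system (your $w$ is the paper's polynomial scaled by the nonzero leading cofactor $c_k$), with Theorem~\ref{lineardependence} playing the identical role of guaranteeing invertibility of $\bigl(\sigma^{i-1}(u_j)\bigr)$. If anything you are more explicit than the paper about the closing step, where Theorem~\ref{dimension} upgrades the inclusion $U\subseteq\{x\in L: w(\sigma)x=0\}$ to equality.
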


\begin{proof} Let $x_1$, \dots, $x_k$ be a basis for $U$ over $K$ and let $S$ be the $k\times k$ matrix described in Theorem \ref{lineardependence}. $S$ is invertible by what we have proved and therefore its transpose $S'$ is also invertible. Let $\Sigma$ be the $k\times 1$ column vector whose $i$-th entry is
$\sigma^k(x_i)$, $1\le i\le k$. Then since $S'$ is invertible, there is a unique $k\times 1$ column vector
$B$, say, with
\[
S'B=\Sigma.
\]
If the entries of $B$ are $b_0$, \dots, $b_{k-1}$, we find that 
\[ 
w=t^k-b_{k-1}t^{k-1}-\cdots-b_1t-b_0
\]
is a polynomial with the required property. 
\end{proof}

Theorem 3.52 of \cite{Lidl} is a version of this result for finite fields.

We continue with the hypothesis that $L$ is a cyclic Galois extension of $K$ and $\sigma$ generates
the Galois group $G$ of $L$ over $K$. We suppose initially that $n=2m+1$ is odd. Enumerating the elements of $G$
as
\[
1,\,\sigma,\,\sigma^{-1}, \ldots,\,\sigma^m,\,\sigma^{-m}.
\]
we define, as before,  the  subspace $A^i$ of $\Alt(L)$ by
\[
A^i=\{\, f_{b,\,\sigma^i}:\, b\in L\,\}, \quad 1\le i\le m.
\]

\begin{theorem} \label{odddirectsumrank}  Let $L$ be a cyclic Galois extension of $K$ of odd degree $2m+1$ and suppose that $\sigma$ generates
the Galois group of $L$ over $K$. Define the subspaces $A^i$
of $\Alt(L)$ as above for $1\le i\le m$. Then for any integer $k$ satisfying $1\le k\le m$, all non-zero
elements of the $nk$-dimensional subspace
\[
A^1\oplus\cdots\oplus A^k
\]
of $\Alt(L)$ have rank at least $n-2k+1$.
\end{theorem}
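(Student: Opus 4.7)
The plan is to bound the dimension of the radical of a general element
\[
f = \sum_{i=1}^{k} f_{b_i,\,\sigma^i} \in A^1 \oplus \cdots \oplus A^k
\]
using two ingredients: the polynomial kernel bound of Theorem \ref{dimension}, and the standard fact that an alternating bilinear form has even rank. I would proceed by induction on $k$. The base case $k=1$ follows immediately from Lemma \ref{oddorderrank}: since $\sigma$ generates a cyclic group of odd order $n$, any non-zero $f_{b,\,\sigma}$ has rank $n-1 = n - 2(1) + 1$.

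For the inductive step, I would first dispose of the case $b_k = 0$: such an element lies in $A^1 \oplus \cdots \oplus A^{k-1}$, and the inductive hypothesis gives rank at least $n - 2(k-1) + 1 \ge n - 2k + 1$. So assume $b_k \ne 0$. Imitating the radical computation used in the proof of Theorem \ref{isomorphism} (Corollary \ref{radical} applied termwise together with linearity of the trace), the radical $R$ of $f$ consists of those $x \in L$ satisfying
\[
\sum_{i=1}^{k} \bigl( \sigma^{-i}(b_i)\,\sigma^{-i}(x) - b_i\,\sigma^i(x) \bigr) = 0.
\]
I would then apply $\sigma^k$ to clear the negative powers and rewrite the identity as $w(\sigma)(x) = 0$, where
\[
w(t) = \sum_{i=1}^{k} \sigma^{k-i}(b_i)\, t^{k-i} \;-\; \sum_{i=1}^{k} \sigma^{k}(b_i)\, t^{k+i} \;\in\; L[t].
\]
The leading coefficient is $-\sigma^k(b_k) \ne 0$, so $w$ has degree exactly $2k$, and Theorem \ref{dimension} yields $\dim R \le 2k$, i.e.\ $\mathrm{rank}\,f \ge n - 2k$.

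The final step is to gain one more unit of rank by a parity argument: the rank of any alternating bilinear form is even, while $n - 2k$ is odd because $n$ is odd. Therefore $\mathrm{rank}\,f$ cannot equal $n-2k$, and consequently $\mathrm{rank}\,f \ge n - 2k + 1$, as required. The only step that needs genuine care is the translation into skew-polynomial form: the multiplication by $\sigma^k$ must twist the coefficients $b_i$ correctly, and one has to verify that the resulting leading coefficient $-\sigma^k(b_k)$ is nonzero so that Theorem \ref{dimension} applies with the intended degree. Beyond that the argument is mechanical, and the parity observation is the essential trick that upgrades the bare kernel bound from Theorem \ref{dimension} to the sharper $n - 2k + 1$.
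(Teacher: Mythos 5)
Your proposal is correct and follows essentially the same route as the paper: the same radical computation via Corollary \ref{radical} and Artin independence, the same twisted polynomial $w$ of degree at most $2k$, the same appeal to Theorem \ref{dimension}, and the same parity argument using the evenness of the rank of an alternating form and the oddness of $n$. The only cosmetic difference is your induction on $k$ to guarantee a nonzero leading coefficient; the paper avoids this by simply applying Theorem \ref{dimension} with whatever the actual degree of the (nonzero) polynomial $w$ is, which is still at most $2k$.
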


\begin{proof} A typical element $g$, say, of $A^1\oplus\cdots\oplus A^k$ is expressible as
\[
g=\sum_{i=1}^k f_{b_i,\,\sigma^i}
\]
for suitable $b_1$, \dots, $b_k$ in $L$. Let $R$ denote the radical of $g$ and let $x$ be an element
of $R$. Then we have
\[
\sum_{i=1}^k \Tr(b_i(x\sigma^i(y)-\sigma^i(x)y))=0
\]
for all $y\in L$. The argument of Theorem \ref{isomorphism} implies that
\[
\sum_{i=1}^k \sigma^{-i}(b_ix)-b_i\sigma^i(x)=0.
\]
Applying $\sigma^k$ to each side, we obtain
\[
\sum_{i=1}^k \sigma^{k-i}(b_ix)-\sigma^k(b_i)\sigma^{i+k}(x)=0.
\]
We obtain
\[
w(\sigma)x=0,
\]
where $w\in L[t]$ is the polynomial 
\[
\sum_{i=1}^k \sigma^{k-i}(b_i)t^{k-i}-\sigma^k(b_i)t^{i+k}.
\]
Since this polynomial has degree at most $2k$, Theorem \ref{dimension} implies that $\dim R\le 2k$.
However, $g$ has even rank 
\[
n-\dim R
\]
and hence we cannot have $\dim R=2k$, since $L$ has odd dimension over $K$. We deduce that
$\dim R\le 2k-1$ and $g$ has rank at least $n-2k+1$.
\end{proof}

Theorem \ref{odddirectsumrank} was obtained by Delsarte and Goethals when $K$ is a finite field, although the methods they used involved counting and did not appeal to principles such as those
contained in Theorem \ref{dimension}. They in fact proved more, since they showed that
\[
A^1\oplus\cdots\oplus A^k
\]
contains elements of all possible even ranks lying between $n-1$ and $n-2k+1$. Furthermore, they also
showed that the dimension of any subspace of $\Alt(L)$ whose non-zero elements all have rank at least $n-2k+1$
is at most $nk$. Thus $A^1\oplus\cdots\oplus A^k$ is a subspace of maximal dimension with respect to this rank
property in the finite field case (subject to the proviso that $[L:K]$ is odd). See \cite{DG}, Theorems 4 and 7.
In the case of an infinite field $K$,  we do not know if $A^1\oplus\cdots\oplus A^k$ is a subspace 
of maximal dimension with respect to its rank
property.  We do not even know if $n$ is the largest dimension of a subspace of $\Alt(L)$
all of whose non-zero elements have rank $n-1$, although we conjecture this may be the case. We note also that, according to Theorem \ref{odddirectsumrank},  the subspace 
\[
A^1\oplus\cdots\oplus A^{m-1}
\]
has the 
property that all its non-zero elements have rank at least 4, and we have proved in Theorem 9 of [GQ] that
\[
\dim (A^1\oplus\cdots\oplus A^{m-1})=(m-1)n
\]
is the largest possible dimension for a subspace of this kind in $\Alt(L)$. Thus this subspace is certainly of maximal
dimension with respect to this rank property.

We can obtain a version of Theorem \ref{odddirectsumrank} for cyclic extensions of even degree.
The result is slightly less sharp. We omit the proof, as it is in the spirit of Theorem \ref{odddirectsumrank}.

\begin{theorem} \label{evendirectsumrank}  Let $L$ be a cyclic Galois extension of $K$ of even degree $2m+2$ and suppose that $\sigma$ generates
the Galois group of $L$ over $K$. 
Enumerate the elements of $G$ as 
\[
1,\,\sigma,\,\sigma^{-1}, \ldots,\,\sigma^m,\,\sigma^{-m},\,\sigma^{m+1}.
\]
Define the subspaces $A^i$
of $\Alt(L)$ as previously for $1\le i\le m+1$. Then for any integer $k$ satisfying $1\le k\le m$, all non-zero
elements of the $nk$-dimensional subspace
\[
A^1\oplus\cdots\oplus A^k
\]
of $\Alt(L)$ have rank at least $n-2k$.
\end{theorem}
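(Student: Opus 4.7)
The plan is to follow the proof of Theorem~\ref{odddirectsumrank} essentially verbatim; the only step with a genuinely different outcome is the parity argument at the end.

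First, I would take a nonzero element $g = \sum_{i=1}^k f_{b_i,\,\sigma^i}$ in $A^1 \oplus \cdots \oplus A^k$ and let $R$ be its radical. Using Lemma~\ref{invariance} and the nondegeneracy of the trace, the condition $g(x,y)=0$ for all $y \in L$ reduces to
\[
\sum_{i=1}^k \bigl(\sigma^{-i}(b_i x) - b_i \sigma^i(x)\bigr) = 0.
\]
Applying $\sigma^k$ to both sides and rewriting in operator form gives $w(\sigma)\,x = 0$, where
\[
w(t) = \sum_{i=1}^k \bigl(\sigma^{k-i}(b_i)\,t^{k-i} - \sigma^k(b_i)\,t^{\,i+k}\bigr) \in L[t].
\]
Because $\sigma$ is an automorphism, the coefficients of $w$ all vanish only when every $b_i$ is zero, that is, when $g = 0$; so for $g \ne 0$ the polynomial $w$ is nonzero of degree at most $2k$, and Theorem~\ref{dimension} forces $\dim R \le 2k$.

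Next I would carry out the parity step, which is where the even-degree case genuinely diverges. Every alternating form has even rank, so $\dim R$ and $n$ share the same parity. In the odd-degree case this forced $\dim R \le 2k-1$; here $n = 2m+2$ is even, so $\dim R$ is permitted to be even, and the estimate $\dim R \le 2k$ cannot be sharpened. Hence $\mathrm{rank}\,g = n - \dim R \ge n - 2k$, as claimed. The assertion $\dim(A^1 \oplus \cdots \oplus A^k) = nk$ follows from Theorem~\ref{subspacesrank} (each $\sigma^i$ with $1 \le i \le m$ has order strictly greater than $2$, so $\dim A^i = n$) together with the direct sum property recorded in Corollary~\ref{evendirectsum}.

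There is no substantive obstacle: the argument is structurally identical to that of Theorem~\ref{odddirectsumrank}, and the loss of one in the rank bound is precisely the price paid for no longer being able to exploit a parity mismatch between $n$ and $\dim R$. The only modelling choice worth flagging is the restriction $k \le m$, which keeps the involution $\sigma^{m+1}$ (and its attendant drop in $\dim A^{m+1}$ to $n/2$) out of the picture entirely.
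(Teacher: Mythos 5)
Your proof is correct and is exactly the argument the paper intends: the authors omit the proof of Theorem~\ref{evendirectsumrank}, stating only that it is ``in the spirit of Theorem~\ref{odddirectsumrank}'', and your write-up carries out that adaptation faithfully, including the correct observation that the parity improvement $\dim R\le 2k-1$ is unavailable when $n$ is even. Your closing remark about why $k\le m$ is needed (so that each $\sigma^i$ has order greater than $2$ and $\dim A^i=n$) is also accurate.
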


Most of our findings in this paper admit interpretations in terms of the space of $n\times n$ skew-symmetric matrices with entries in a field $K$ admitting a Galois extension (more especially a cyclic
Galois extension) of degree $n$. Here, for example, is a version of  Theorem \ref{odddirectsumrank}.

\begin{theorem} \label{skewsymmetric} Let $K$ be a field admitting a cyclic Galois extension of odd degree $n=2m+1$ and let $\Alt_n(K)$ denote the vector space of all $n\times n$ skew-symmetric matrices with entries in $K$. There are $m$ subspaces $S^1$, \dots, $S^m$, say, each of dimension $n$ such that
\[
\Alt_n(K)=S^1\oplus \cdots \oplus S^m.
\]
The non-zero elements of each subspace $S^i$ have constant rank, depending on the order of the element in the Galois group used to define $S^i$. Furthermore,
 for any integer $k$ satisfying $1\le k\le m$, all non-zero
elements of the $nk$-dimensional subspace
\[
S^1\oplus\cdots\oplus S^k
\]
of $\Alt_n(K)$ have rank at least $n-2k+1$.

\end{theorem}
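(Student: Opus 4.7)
The plan is to reduce Theorem \ref{skewsymmetric} to the bilinear-form statements already proved. I would fix a cyclic Galois extension $L$ of $K$ of degree $n=2m+1$ and a generator $\sigma$ of its Galois group $G$, and choose any ordered $K$-basis $e_1,\ldots,e_n$ of $L$. Then the Gram-matrix map $\mu\colon \Alt(L)\to \Alt_n(K)$ sending $f$ to the skew-symmetric matrix $[f(e_i,e_j)]_{i,j}$ is a $K$-linear isomorphism of vector spaces, and the rank of $f$ as a bilinear form equals the rank of $\mu(f)$ as a matrix. This is the whole bridge between the intrinsic setup of the earlier sections and the matrix formulation demanded here.

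With this translation in place, I would set $S^i = \mu(A^i)$ for $1\le i\le m$, where $A^i = A^{\sigma^i}$ is the subspace of $\Alt(L)$ defined before Corollary \ref{odddirectsum}. Each $S^i$ is a $K$-subspace of $\Alt_n(K)$ of dimension $n$ by Theorem \ref{subspacesrank}; here one uses that every $\sigma^i$ with $1\le i\le m$ has odd order (since $|G|=2m+1$), so no involutions appear and Theorem \ref{subspacesrank} supplies both $\dim A^i=n$ and the constant-rank property for non-zero elements, which transfers through $\mu$ to $S^i$. Since $\mu$ is an isomorphism, the direct-sum decomposition $\Alt_n(K)=S^1\oplus\cdots\oplus S^m$ is simply the image under $\mu$ of the decomposition $\Alt(L)=A^1\oplus\cdots\oplus A^m$ provided by Corollary \ref{odddirectsum}. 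The final partial-sum rank bound then follows by transporting Theorem \ref{odddirectsumrank} through $\mu$: any non-zero $M\in S^1\oplus\cdots\oplus S^k$ equals $\mu(g)$ for a unique non-zero $g\in A^1\oplus\cdots\oplus A^k$, and the rank of $M$ equals the rank of $g$, which is at least $n-2k+1$.

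The main obstacle is essentially just bookkeeping, since this is a translation result rather than new content. The one point that deserves explicit attention is the enumeration of $G$: because $|G|=2m+1$ is odd, the non-identity elements pair up as $\{\sigma^i,\sigma^{-i}\}$ for $1\le i\le m$, so the enumeration required by Corollary \ref{odddirectsum} is indeed available, and the definitions of $A^i$ (and hence $S^i$) are unambiguous up to a choice of one representative from each pair. Once this is checked, no additional argument is needed beyond invoking the results already established in the preceding sections.
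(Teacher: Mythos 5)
Your proposal is correct and is essentially the argument the paper intends: the theorem is stated there as a direct translation of Corollary \ref{odddirectsum} and Theorem \ref{odddirectsumrank} via the choice of a $K$-basis of $L$ and the rank-preserving Gram-matrix isomorphism $\Alt(L)\to\Alt_n(K)$, which is exactly the bridge you construct. No further comparison is needed.
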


There is of course a version of Theorem \ref{evendirectsumrank} which may be expressed in terms
of skew-symmetric matrices. We omit the formal enunciation. We note that, for example,
any algebraic number field admits a cyclic extension of any specified positive degree. See \cite{Go},
Lemma 4. 

\section{Final observations}

\noindent Our construction of subspaces of alternating bilinear forms having, for example, the constant rank property, is clearly dependent on the existence of extensions of the underlying field, and entirely different results hold for algebraically closed fields such as the complex numbers $\mathbb{C}$.  For example, the following
is proved in \cite{Gr}, Corollary 1.2. Let $U$ be a subspace of  $\Alt_n(\mathbb{C})$ all of whose
non-zero elements have constant rank $r$. Then we have 
\[
\dim U\le  2(n-r)+1.
\]
In particular, if $n$ is odd and the non-zero elements of $U$ have constant rank $n-1$, then $\dim U\le 3$. When $\mathbb{F}$ is either an algebraically closed field of characteristic different from 2 or the field
of real numbers,  3-dimensional subspaces of $\Alt_5(\mathbb{F})$ where all non-zero elements
have rank $4$ are discussed in Corollaries 1 and 2 of \cite{Go}. We note that the paper \cite{I}
has a good discussion of literature relating to subspaces of matrices of constant or bounded rank. 

We have concentrated in this paper on the construction of alternating bilinear forms by Galois theoretic
methods. More generally, we can generate arbitrary bilinear forms, including symmetric ones,
by a simple extension of the technique we have already adopted. We briefly
sketch some details. 

Let $L$ be a Galois extension of $K$ of degree $n$ and let $G$ be the Galois group of $L$ over $K$.
Let $\Bil(L)$ denote the vector space of $K$-valued bilinear forms on $L\times L$. Let
\[
\sigma_1, \ldots, \sigma_n
\]
be the elements of $G$. Given elements $b_1$, \dots, $b_n$ of $L$, define
\[
f=f_{b_1, \ldots, b_n}: L\times L\to K
\]
by
\[
f(x,y)=\Tr((b_1\sigma_1(x)+\cdots +b_n\sigma_n(x))y)
\]
for all $x$ and $y$ in $L$. It is elementary to see that $f\in\Bil(L)$ and Artin's theorem shows that we obtain all of $\Bil(L)$ in this way. Furthermore, there is  a direct sum decomposition 
\[
\Bil(L)=B_1\oplus \cdots \oplus B_n,
\]
where each subspace $B_i$ has dimension $n$ and all its non-zero elements are invertible.

When $G$ is cyclic with generator $\sigma$, and we take $\sigma_i=\sigma^i$ for $1\le i\le n$, the argument of Theorem 6 shows that the non-zero elements of the $nk$-dimensional subspace 
\[
B_1\oplus \cdots \oplus B_k
\]
have rank at least $n-k+1$, for $1\le k\le n$. Section 6 of \cite{D} contains a statement and proof
of the assertions above when $K$ is finite.
\smallskip

\noindent {\bf Acknowledgements.  }  The authors wish to thank Gary McGuire and Tom Laffey for helpful
conversations which influenced the way in which the ideas in this paper were developed.

\end{document}